\date{}
\newtheorem{theorem}{Theorem}[section]
\newtheorem{lemma}{Lemma}[section]
\newtheorem{corollary}{Corollary}[section]
\newenvironment{proof}[1][Proof]{\begin{trivlist}
\item[\hskip \labelsep {\bfseries #1}]}{\end{trivlist}}
\newcommand{\qed}{\nobreak \ifvmode \relax \else
      \ifdim\lastskip<1.5em \hskip-\lastskip
      \hskip1.5em plus0em minus0.5em \fi \nobreak
      \vrule height0.75em width0.5em depth0.25em\fi}
\title{Distributed computation of homology using harmonics}
\author{Harish Chintakunta \and Hamid Krim}
\begin{document}
\maketitle

\begin{abstract}
We present a distributed algorithm to compute the first homology of a simplicial complex. Such algorithms are very useful in topological analysis of sensor networks, such as its coverage properties. We employ spanning trees to compute a basis for algebraic 1-cycles, and then use harmonics to efficiently identify the contractible and homologous cycles. The computational complexity of the algorithm is $O(|P|^\omega)$, where $|P|$ is much smaller than the number of edges, and $\omega$ is the complexity order of matrix multiplication. For geometric graphs, we show using simulations that $|P|$ is very close to the first Betti number.
\end{abstract}


\section{Introduction}

Many tasks in sensor networks may be \cite{chintakunta2011topological} alternatively stated in topological terms. The tasks of detection and localization of coverage holes and worm holes are two such examples \cite{tahbaz2010distributed,chintakunta2011topological}. Many other tasks such as detecting jamming regions \cite{xu2006jamming} or detecting spatially correlated signals \cite{gupta2005efficient,chintakuntaCAMSAP2011} also come under the the context of computing certain topological features. Owing to the characteristics of limited power supplies hardware, it is preferable to have algorithms which are distributed and perform the tasks with no more information than is theoretically required.\\

The information which may be easily obtained in sensor networks, is for each node to know its neighboring nodes. Two nodes are neighboring nodes if they can communicate with each other. This is equivalent to having a distributed representation of the communication graph. With limited communication between the nodes, we may also obtain the higher order cliques in the graph \cite{muhammad2007decentralized} to form a simplicial complex. A $k-$clique is complete subgraph with $k$ nodes.  Such combinatorial information is sufficient to compute the topological invariants, and is the subject of Algebraic Topology. \\

In addition, algebraic topology has seen useful applications in topological analysis of data \cite{carlsson2009topology,gamble2010exploring,heo2012topological,jiang2011statistical}, where the data is available as a point cloud and the topology of the underlying manifold is of interest. A simplicial complex is first built on the point cloud data, usually using some geometric criteria, followed by computation of topological invariants such as homology spaces and persistent homology. The computations in this scenario need not necessarily be distributed, but parallel and distributed algorithms can greatly speed up the process.\\

The application of homology spaces to the coverage problem was first introduced in \cite{deSilvaGhrist}. Given a set of ``fence'' edges, the work in \cite{deSilvaGhrist} provides a necessary and sufficient condition to verify the coverage inside the fence, and recent work in \cite{dlotko2012distributed} provides a distributed algorithm to perform this verification. The work in \cite{dlotko2012distributed} also computes some specific generators of certain homology space, which may be used to select a subset of sensors which could still provide the required coverage.\\

When coverage cannot be guaranteed, \cite{ghrist2005coverage} gives a criterion using persistent homology to guarantee the existence of holes. In this scenario, counting, enumerating and localizing the coverage holes can prove very useful in planning counter measures such as additional deployment on sensors. Counting the number of holes requires computing the first Betti number (the rank of the 1st homology space), and enumerating and localizing requires computing an optimal (in terms of length) homology generating set. Centralized algorithms for computing an optimal generating set have been developed recently \cite{busaryev2012annotating}. \\

Identifying individual coverage holes is also important in surveillance and tracking applications. Given specific generators of homology (cycles in the graph) and limited technological resources, it is possible to partially track multiple agents moving in the observation field  even in the presence of coverage holes \cite{yu2010probabilistic,yu2008tracking,5508237}.\\

The work in \cite{muhammad2007decentralized} is, according to our knowledge, the first work towards decentralized computation of homology in sensor networks. The algorithm in \cite{muhammad2007decentralized} computes a single generator for homology with coefficients in  $\mathbb{R}$, and therefore, cannot generate the entire homology in case of more than one hole. The generator is a harmonic computed in a manner similar to that described in this paper. For arbitrary number of coverage holes, they propose a spectral decomposition of the first Laplacian using the distributed algorithm presented in \cite{kempe2004decentralized}. The communication complexity per node of the algorithm in \cite{kempe2004decentralized} is $O(k^3)$, for computing $k$ principal eigenvectors. Therefore, a complete spectral decomposition of the Laplacian using this method is burdensome in sensor networks, where distributed algorithms with lower complexity are needed. \\

Localization of coverage holes is done in \cite{tahbaz2010distributed} by minimizing the $l_1$ norm of cycles homologous to a harmonic, and when the holes are sufficiently apart, can enumerate them. They employ sub-gradient methods to minimize the $l_1$ norm distributively which are slow but are guaranteed to converge. The approach used for localization in \cite{5508237,chintakunta2011topological} is to successively partition the network into two parts, and detect the presence of holes in each. Since detection of holes (or equivalently, the triviality of the homology space) is a simpler problem than that computing the homology itself, this approach leads to significant improvement in the complexity. Both these methods however, do not guarantee a proper enumeration, and hence, cannot identify individual coverage holes.\\

A distributed algorithm which can compute homology in $\mathbb{Z}_2$ coefficients will efficiently enumerate the coverage holes and help localize them. The algebraic cycles with coefficients in $\mathbb{Z}_2$, have a straightforward interpretation as cycles in the graph. It is for this reason, homology with coefficients in $\mathbb{Z}_2$ is most appropriate for networks. This paper presents a completely distributed algorithm to compute the first homology space. \\


In addition to their applications in sensor networks, distributed algorithms are very useful for processing in computer clusters which have a similar architecture. There have been some efforts recently to parallelize the homology computation \cite{boltcheva2010constructive,lewismulticore}, which employ parallel processors with shared memory. Computing persistent homology is another way of computing the homology with algorithms known in matrix multiplication time, where the size of the matrix is equal to the number of simplices in the complex \cite{milosavljevic2010zigzag}.  The fastest known algorithms are centralized \cite{viditMorseTheoryPersistence,wilkersonSimplifying2013}, which first reduce the given complex into a smaller complex while preserving the homology, and then compute the homology of the resulting complex. \\

The inspiration for the work presented here stems from the following simple question: Given a cycle in a graph (more specifically, in the 1-skeleton of a simplicial complex), is there a simple distributed methodology to test if the cycle is contractible? The answer turns out be, yes! This ability along with the fact that a basis for 1-cycles may easily be computed distributively, leads to the algorithm presented here.  We give a brief summary of our algorithm below. The meaning of the notations in the summary will be clear form the context, and they are formally defined in Section \ref{sec:Preliminaries}.

\subsection{summary of algorithm}
Our goal is to compute a maximal (in cardinality) set $H$ of 1-cycles $\{c_i\}$, such that the corresponding equivalent classes $\{ [c_i] \}$ are linearly independent elements in homology, thus forming a basis. We say then the 1-cycles in $\{c_i\}$ are homology generating cycles and that the set $\{c_i\}$ generates the first homology.  We first obtain a basis $Z$ for the 1-cycles using a spanning tree. This approach for computing $Z$ is well known and is extensively used in the literature \cite{stillwell1980classical,erickson2005greedy,busaryev2012annotating}. Fast distributed algorithms for computing minimum spanning trees for general weighted graphs are also well known \cite{gallager1983distributed}. When all the edges have equal weight, finding a spanning tree becomes particulary simple as we describe in Section \ref{subsec:computing_spanning_tree}.  \\

We then use a harmonic on the 1-skeleton to identify homologous cycles and select a subset $P \subseteq Z$ of representative cycles which contains the required set $H$. The algorithm presented in \cite{muhammad2006control} distributively computes a harmonic by solving a dynamic system. We provide a simple  methodology to distributively select the parameters to guarantee convergence, and show that the algorithm converges exponentially. \\

The final stage of selecting a maximal linearly independent (in homology) subset $H$  of $P$ is performed at the root node of the spanning tree. The set $P$ is usually much smaller than $Z$, and for geometric graphs, we show using simulations, that the size of $P$ is very close to the first Betti number. The selection of the subset $H$ is done by column reducing a matrix, which as observed in \cite{busaryev2012annotating}, may be performed in matrix multiplication time.

\subsection{organization}
The remainder of the paper is organized as follows. In Section \ref{sec:Preliminaries}, we introduce some basic concepts from homology theory and notations used in this paper. Sections \ref{sec:computing_harmonics} and \ref{sec:homology_generating_cycles} describe the centralized version of the algorithm. We describe the computation of harmoincs in Section \ref{sec:computing_harmonics}, and computation of homology generating set in Section \ref{sec:homology_generating_cycles}. Section \ref{sec:distributed_computation} presents the details of distributed computation of the algorithm. We analyze the complexity of both centralized and distributed versions in Section \ref{sec:complexity}, and finally, conclude in Section \ref{sec:conclusion}.

\section{Preliminaries}
\label{sec:Preliminaries}
In this section, we briefly introduce some concepts of homology theory and some notations. We refer the readers to \cite{hatcheralgebraic}, for an introduction to algebraic topology , and \cite{de2007homological,chintakunta2011topological,muhammad2006control}, for applications of this theory in analysis of coverage properties in sensor networks.   \\

Given a simplicial complex $K$, the 1-skeleton is a graph $G = (V,E)$, which is a subcomplex of $K$ with nodes (0-simplices) $V$ and edges (1-simplices) $E$. The number of nodes is denoted by $N$ and number of edges by $|E|$. We assume that $G$ is connected. \\

Abstract vector spaces $C_0,C_1$ and $C_2$ are constructed using 0,1 and 2 simplices respectively as basis elements. An $i-$simplex has an arbitrary, but fixed, binary orientation given by the ordering on its vertex set. Given a $i-$simplex, the simplex with opposite orientation is its additive inverse in $C_i$. The linear operators $\partial_i:C_i \rightarrow C_{i-1}$, denoted as boundary operators, capture the combinatorial structure of the simplicial complex and are given as $\partial_i(v_{j_0},\ldots,v_{j_i}) = \sum_{l=0}^{i}{(-1)^{l}(v_{j_0},\ldots,\hat{v}_{j_l},\ldots,v_{j_i})}$, where $\hat{v}_{j_l}$ implies that $v_{j_l}$ is removed.\\

The null space of the first boundary operator $ker(\partial_1)$ is the space of 1-cycles. The image of the second boundary operator $Img(\partial_2)$ is the space 1-boundaries. The first homology space $H_1(K)$ is defined as the quotient space $ker(\partial_1)/Img(\partial_2)$. As we are using field coefficients, any torsion in the integer homology of $K$ will be lost. Many complexes of interest, such as Rips complex of a geometric graph on a plane  used to model sensor networks, can be shown to have to have no torsion \cite{chambers2010vietoris}.\\

Two cycles $c_1,c_2 \in ker(\partial_1)$ are said to be homologous to each other if their difference is a boundary, i.e. $c_1 - c_2 \in Img(\partial_2)$.  Elements in $H_1(K)$ are equivalent sets of cycles which are equivalent to each other. The set of homologous cycles to a cycle $c$ is denoted as $[c]$. Consider a set of cycles $\{c_i\}$ such that the corresponding set of equivalent classes $\{[c_i]\}$ forms a basis for $H_1(K)$. We call such a set $\{c_i\}$ a homology generating set, denoted by $H$. Our goal is to compute a homology generating set $H$. \\

The first order combinatorial Laplacian $L_1:C_1\rightarrow C_1$, is defined as
\begin{equation}
L_1 = \partial_2\partial_2^T + \partial_1^T\partial_1
\end{equation}
and the space of 1-harmonics \footnote[2]{which we refer to as harmonics for simplicity} is defined as the null space of $L_1$. We also denote by $L_1$, the matrix representation of the linear operator in the standard basis for $C_1$, and by $\partial_1$ and $\partial_2$, the matrix representation of the boundary operators in the corresponding standard bases.  We refer the reader to Section III in \cite{muhammad2006control} for a formula for the elements of $L_1$ and examples of harmonics. We present this formula in Appendix \ref{app:formulaForL1} for readers' convenience.  In what follows, we refer to the first homology simply as homology.


\section{Computing harmonics}
\label{sec:computing_harmonics}
Consider the following dynamic system:
\begin{equation}
\frac{d y(t)}{dt} = - L_1 y(t) \nonumber
\end{equation}
Note that the stable point of the above dynamic system is a harmonic. It is shown in \cite{muhammad2006control} that the above dynamic system converges for any initial point $y(0)$. A discrete version of the above system is given by:

\begin{equation}
\label{equ:randomHarmonicIteration}
y^{k+1} = y^{k} - \delta L_1 y^k
\end{equation}

Here, we derive the sufficient conditions for the range of  $\delta$ to guarantee convergence. We also show that, 1)  under these conditions, Iteration \ref{equ:randomHarmonicIteration} has a unique convergence point, 2) that it converges exponentially, and 3) derive the convergence rate.\\

Let $y^0$ be a random vector of dimension $E$, where the elements are generated independently from a uniform distribution on the interval $[-0.5,0.5]$. Since $L_1$ is a diagonalizable matrix,  vector $y^0$ may be expressed as a linear combination of eigenvectors of $L_1$. Let $0<\lambda_1 \le \lambda_2 \le \ldots \lambda_m$ be positive eigenvalues of $L_1$, and let $y^0 = \sum_i{\alpha_i v_i}$, where $v_i$ are orthonormal eigenvectors of $L_1$. Let $K$ be a matrix with column space  equal to the null space of $L_1$. The projection of $y^0$ onto the null space of $L_1$ is equal to $KK^Ty^0$.  Using Equation \ref{equ:randomHarmonicIteration}
\begin{equation*}
y^1 = KK^Ty^0 +  \sum_i{(1-\delta \lambda_i)\alpha_i v_i}
\end{equation*}
and in general,
\begin{equation}
\label{iteration_equation}
y^k = KK^Ty^0 + \sum_i{(1-\delta \lambda_i)^k\alpha_i v_i}
\end{equation}
The sequence $\{y^k\}$ converges if and only if $|1-\delta \lambda_i| < 1$ for all $i$, or equivalently,
\begin{equation}
\label{equ:deltaValues}
0 < \delta < 2/\lambda_i, \quad \forall i
\end{equation}
Given a starting vector $y^0$, and a scalar $\delta$ satisfying inequality (\ref{equ:deltaValues}), the sequence $\{y^k\}$ converges uniquely to $KK^Ty^0$ and the convergence rate is dominated by the smallest non-zero eigen value. The above discussion is summarized in the following lemma.

\begin{theorem}
\label{theo:harmonicConvergence}
Let $y^0$ be an initial random vector for Iteration  (\ref{equ:randomHarmonicIteration}). Let $K$ be a matrix with column space  equal to the null space of $L_1$. Then the iteration converges to $y^\infty =  KK^Ty^0$ if and only if $\delta$ satisfies inequality (\ref{equ:deltaValues}).

Further, when inequality ($\ref{equ:deltaValues}$) is satisfied, the iteration converges exponentially with rate $1-\delta\lambda_1$.
\end{theorem}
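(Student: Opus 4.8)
The plan is to diagonalize $L_1$ and track Iteration (\ref{equ:randomHarmonicIteration}) mode by mode. First I would record the structural facts about $L_1$: since $L_1 = \partial_2\partial_2^T + \partial_1^T\partial_1$ is a sum of two matrices of the form $MM^T$, it is symmetric and positive semidefinite, hence orthogonally diagonalizable with real, nonnegative eigenvalues. Writing $\lambda_1 \le \dots \le \lambda_m$ for the \emph{positive} eigenvalues and $\{v_i\}$ for a corresponding orthonormal set of eigenvectors, the eigenvectors with eigenvalue $0$ span $\ker(L_1)$, which is exactly the column space of $K$, and $KK^T$ is the orthogonal projector onto $\ker(L_1)$. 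Decomposing the initial vector in this orthonormal basis gives $y^0 = KK^T y^0 + \sum_i \alpha_i v_i$ with $\alpha_i = \langle v_i, y^0\rangle$, the sum running over the positive modes.

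Next I would establish Equation (\ref{iteration_equation}) by induction on $k$, the base case being the decomposition above. For the inductive step, apply the update map $y \mapsto y - \delta L_1 y$: it fixes $KK^T y^0$ because $L_1$ annihilates the kernel, and it scales each $v_i$ by $1 - \delta\lambda_i$ because $L_1 v_i = \lambda_i v_i$; linearity then yields $y^{k+1} = KK^T y^0 + \sum_i (1-\delta\lambda_i)^{k+1}\alpha_i v_i$. Since the $v_i$ are orthonormal and orthogonal to $\ker(L_1)$, Pythagoras gives $\|y^k - KK^T y^0\|^2 = \sum_i (1-\delta\lambda_i)^{2k}\alpha_i^2$, which is the identity that drives everything else.

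From this identity the convergence dichotomy is immediate: $\{y^k\}$ converges (and then necessarily to $KK^T y^0$, the only candidate limit since the projection term is constant and the remaining modes must vanish) if and only if $(1-\delta\lambda_i)^k \alpha_i \to 0$ for every $i$. Because $y^0$ is drawn from a continuous distribution, almost surely $\alpha_i \ne 0$ for all $i$, so this is equivalent to $|1-\delta\lambda_i| < 1$ for all $i$, i.e. $0 < \delta < 2/\lambda_i$ for all $i$, which is inequality (\ref{equ:deltaValues}) (the binding constraint being the largest eigenvalue $\lambda_m$). For the rate, bound $\|y^k - y^\infty\| \le \big(\max_i |1-\delta\lambda_i|\big)^{k}\,\|y^0 - y^\infty\|$; under (\ref{equ:deltaValues}) each factor $1-\delta\lambda_i$ lies in $(-1,1)$, and the slowest-decaying mode is the one associated with the smallest positive eigenvalue $\lambda_1$, so the error contracts geometrically with ratio $1-\delta\lambda_1$, giving exponential convergence.

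I expect two delicate points. The first is justifying that \emph{all} the $\alpha_i$ are nonzero: this is what makes the "only if" direction genuinely force the bound for \emph{every} eigenvalue rather than merely for those modes excited by $y^0$, and I would handle it purely through the genericity of the random initialization (the bad set $\{\alpha_i = 0\}$ is a finite union of hyperplanes, hence Lebesgue-null). The second is pinning down the precise contraction ratio, since $\max_i|1-\delta\lambda_i|$ equals $1-\delta\lambda_1$ only when $\delta \le 2/(\lambda_1+\lambda_m)$ and otherwise equals $\delta\lambda_m-1$; I would either note that the admissible $\delta$ used by the algorithm falls in the former regime, or simply state the rate as the dominant term $1-\delta\lambda_1$ governing the tail behaviour, as the theorem does.
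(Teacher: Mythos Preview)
Your proposal is correct and follows essentially the same route as the paper: diagonalize $L_1$, decompose $y^0$ into its kernel projection $KK^T y^0$ plus the positive eigenmodes, derive Equation~(\ref{iteration_equation}), and read off both the convergence condition $|1-\delta\lambda_i|<1$ and the dominant rate $1-\delta\lambda_1$. If anything, you are more careful than the paper about two points it leaves implicit---the almost-sure nonvanishing of all the $\alpha_i$ needed for the ``only if'' direction, and the fact that $\max_i|1-\delta\lambda_i|=1-\delta\lambda_1$ only holds for $\delta$ below the midpoint $2/(\lambda_1+\lambda_m)$.
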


We may easily estimate the spectral radius of the matrix using a well known relationship, $\|A\|_2 \le  \sqrt{\|A\|_1 \|A\|_\infty}$, for any finite matrix $A$. Since $L_1$ is symmetric, we have $\|L_1\|_1 = \|L_1\|_\infty$ and therefore $\|L_1\|_2 \le \|L_1\|_1 $. The value $\delta = \frac{1}{\|L_1\|_1}$ satisfies inequality (\ref{equ:deltaValues}). The inequality $\|L_1\|_1 < \sqrt{n}\|L_1\|_2$  ensures  that the convergence rate is not affected severely by approximating the spectral radius with $\|L_1\|_1$.  We discuss the distributed computation of $\|L_1\|_1$ in Section \ref{sec:distributed_computation}.

\section{Homology generating cycles}
\label{sec:homology_generating_cycles}
We start with the assumption that we have a spanning tree $T = (V,E_T)$, $E_T \subseteq E$. A simple distributed algorithm to compute $T$ is discussed in Section \ref{sec:distributed_computation}. We select a node $v_r$, and call it the root node. Given a path $p=(e_{i_1} \cdots e_{i_k})$ in $G$, the corresponding 1-chain $\pi(p)$ is $\pi(p) = \sum_{j=1}^{k}{\alpha_j\sigma_j}$, where $\sigma_j$ is the standard basis element with the support set equal to the incident nodes of $e_{i_j}$, and $\alpha_j = 1$ if $\sigma_j$ and $e_{i_j}$ have the same orientation or $\alpha_j = -1$ otherwise. We define the integral of a harmonic $y$ on a path $p$ (or on the 1-chain $\pi(p)$) to be the dot product $\left<y,\pi(p)\right>$. \\

For an edge $e = (v_1,v_2) \in E \setminus E_T$, let $p_1$ be a path in $T$ joining $v_r$ to $v_1$, and $p_2$ be a path in $T$ joining $v_2$ to $v_r$. Then the path $(p_1,e,p_2)$ is a cycle in $G$, and denote by $\gamma(T,e)$, the corresponding 1-cycle. The set of 1-cycles $Z$ defined as $Z = \cup_{e\in E\setminus E_T}{\gamma(T,e)}$ forms a basis for all 1-cycles. The reader may refer to \cite{busaryev2012annotating} for a simple proof. Let $P$ be a maximal (in cardinality) subset $P \subseteq Z$ of non-contractible cycles, such that no two cycles in $P$ are homologous to each other. Since homology generating cycles are non-contractible, there exists a subset $H \subseteq P$ which generates the homology.\\

\subsection{Identifying contractible cycles}
We turn our attention now to deriving a sufficient condition to identify contractible cycles in $Z$.

\begin{lemma}
\label{lem:harmonics_orthogonal_to_boundaries}
Let $y$ be a harmonic and $c \in B$ a boundary. Then, $\left< y, c \right> = 0 $
\end{lemma}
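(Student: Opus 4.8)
The plan is to first characterize harmonics as the common kernel of $\partial_1$ and $\partial_2^T$, and then conclude by adjointness of $\partial_2$ and $\partial_2^T$. Throughout we use real coefficients, so $C_1$ and $C_2$ carry the standard inner products $\left<\cdot,\cdot\right>$ induced by the standard bases, $\partial_2^T$ is the adjoint of $\partial_2$, and $L_1 = \partial_2\partial_2^T + \partial_1^T\partial_1$ is symmetric and positive semidefinite, being a sum of the two Gram-type operators $\partial_2\partial_2^T$ and $\partial_1^T\partial_1$.

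First I would establish that $L_1 y = 0$ if and only if $\partial_1 y = 0$ and $\partial_2^T y = 0$. The ``if'' direction is immediate from the definition of $L_1$. For the converse, pair $L_1 y = 0$ with $y$:
\begin{equation*}
0 = \left<L_1 y, y\right> = \left<\partial_2\partial_2^T y, y\right> + \left<\partial_1^T\partial_1 y, y\right> = \|\partial_2^T y\|^2 + \|\partial_1 y\|^2 .
\end{equation*}
Since both summands are nonnegative, each must vanish, so $\partial_2^T y = 0$ and $\partial_1 y = 0$. Then, given a boundary $c \in B = Img(\partial_2)$, write $c = \partial_2 z$ for some $z \in C_2$, and compute
\begin{equation*}
\left<y, c\right> = \left<y, \partial_2 z\right> = \left<\partial_2^T y, z\right> = \left<0, z\right> = 0 ,
\end{equation*}
which is the desired identity. (In fact only $\partial_2^T y = 0$ is needed; the vanishing of $\partial_1 y$ is recorded here because it will be useful elsewhere.)

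I do not anticipate a genuine obstacle in this argument. The only point requiring a moment of care is the kernel splitting in the first step, which rests on the positive semidefiniteness of $L_1$ as a sum of two Gram-type terms; this is the elementary Hodge-type decomposition already implicit in the discussion of Section \ref{sec:computing_harmonics}, and can alternatively be quoted from \cite{muhammad2006control}. Everything else is formal manipulation with the adjoint relation between $\partial_2$ and $\partial_2^T$.
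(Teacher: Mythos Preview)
Your proposal is correct and follows essentially the same argument as the paper: both pair $L_1 y$ with $y$ to obtain $\|\partial_2^T y\|^2 + \|\partial_1 y\|^2 = 0$, deduce $\partial_2^T y = 0$, and then finish by adjointness of $\partial_2$ and $\partial_2^T$. The only cosmetic difference is that you also record the (unneeded here) converse direction of the kernel characterization.
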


\begin{proof}
\begin{eqnarray*}
y \in ker(L_1)  &\Rightarrow& L_1y = 0 \\
&\Rightarrow&  y^TL_1y=0 \\
&\Rightarrow&  y^T\partial_2\partial_2^T + y^T\partial_1^T\partial_1y^T = 0\\
&\Rightarrow& \parallel \partial_2^Ty \parallel^2 + \parallel \partial_1y \parallel^2 = 0 \\
&\Rightarrow& \partial_2^Ty = 0 \mbox{ and } \partial_1y = 0 \\
\end{eqnarray*}
and
\begin{eqnarray*}
\partial_2^Ty = 0  &\Rightarrow&  b^T \partial_2^Ty = 0, \qquad \forall b \in C_2 \\
&\Rightarrow& y^T \partial_2 b = 0 \qquad \forall b \in C_2 \\
&\Rightarrow& y^T c = 0 \qquad \forall c \in  B \\
&\Rightarrow& \left< y,c \right> = 0 \qquad \forall c \in B  \qquad  \blacksquare
\end{eqnarray*}
\end{proof}

The above lemma implies that all harmonics integrate to zero on (or are orthogonal to) contractible cycles. The question now is when does $\left<y,c\right>=0$ imply $c$ is contractible. To answer this, we will first look at the set of harmonics which are orthogonal to non-contractible cycles in $Z$.\\

All the possible cycles we consider are obtained from the tree, and therefore, as a vector of coefficients, the elements in $c$ are from the set $\{-1,0,1\}$. As a result, we need to consider only finitely many vectors we consider. For a given cycle $c$, the set of harmonics which are orthogonal to $c$ is the intersection of the hyperplane $c^\perp$ with $ker(L_1)$, the space of harmonics. Therefore, the set of harmonics which can possibly be orthogonal to at-least one of the non-contractible cycles is given by

\begin{equation}
\label{equ:orthogonalSet}
\mathcal{S} = \bigcup_{c\not\in B}{c^\perp \cap ker(L_1)}
\end{equation}

\begin{lemma}
\label{lem:orthogonalHarmonicsDimension}
Let $c$ be a non-contractible cycle, and $b_1$ be the first Betti number. The dimension of the set $c^\perp \cap ker(L_1)$ is strictly less than $b_1$ where $b_1$ is the dimension of $ker(L_1)$.
\end{lemma}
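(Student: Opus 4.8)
The plan is to reduce the claim to the converse of Lemma~\ref{lem:harmonics_orthogonal_to_boundaries}: that a cycle orthogonal to \emph{every} harmonic must be a boundary. Granting this, the lemma follows at once. The set $c^\perp\cap ker(L_1)$ is precisely the kernel of the linear functional $y\mapsto\left<y,c\right>$ on the $b_1$-dimensional space $ker(L_1)$, so its dimension equals $b_1$ when this functional vanishes identically and $b_1-1$ otherwise. Since $c$ is non-contractible, the converse statement supplies a harmonic $y$ with $\left<y,c\right>\neq 0$, so the functional is not identically zero and $\dim\left(c^\perp\cap ker(L_1)\right)=b_1-1<b_1$.

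To establish the converse statement, I would use the orthogonal decomposition of the cycle space
\[
ker(\partial_1)\;=\;Img(\partial_2)\,\oplus\,ker(L_1),
\]
with the summands mutually orthogonal. This is where the real work sits, but it is brief: by the proof of Lemma~\ref{lem:harmonics_orthogonal_to_boundaries} we have $ker(L_1)=ker(\partial_1)\cap ker(\partial_2^T)$; since $Img(\partial_2)\subseteq ker(\partial_1)$ and the orthogonal complement of $Img(\partial_2)$ in $C_1$ is $ker(\partial_2^T)$, the orthogonal complement of $Img(\partial_2)$ \emph{within} $ker(\partial_1)$ is exactly $ker(\partial_1)\cap ker(\partial_2^T)=ker(L_1)$. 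The orthogonality of the two summands is already recorded in Lemma~\ref{lem:harmonics_orthogonal_to_boundaries}.

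With the decomposition in hand I would argue by contradiction. Suppose a cycle $c$ satisfies $\left<y,c\right>=0$ for every $y\in ker(L_1)$. Write $c=\partial_2 b+h$ according to the decomposition above, so $h\in ker(L_1)$ and $\partial_2 b\perp h$. Pairing with $h$ gives $0=\left<c,h\right>=\|h\|^2$, hence $h=0$ and $c=\partial_2 b\in B$; that is, $c$ is contractible. Taking the contrapositive yields the converse statement, and with it the lemma.

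The only delicate point is the spanning half of the decomposition $ker(\partial_1)=Img(\partial_2)\oplus ker(L_1)$ --- that boundaries and harmonics together exhaust all cycles --- and this rests entirely on the identity $ker(L_1)=ker(\partial_1)\cap ker(\partial_2^T)$ from the previous lemma together with the elementary fact $Img(\partial_2)^\perp=ker(\partial_2^T)$. Everything else is a one-line computation.
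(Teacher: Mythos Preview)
Your proof is correct and follows essentially the same route as the paper: both exhibit a harmonic not orthogonal to $c$ by invoking the orthogonal (Hodge) decomposition $\ker(\partial_1)=Img(\partial_2)\oplus\ker(L_1)$ and taking the harmonic component of $c$ itself as the witness. Your write-up is in fact cleaner---you make the orthogonality of the decomposition explicit and compute $\langle c,h\rangle=\|h\|^2$ directly, whereas the paper's coordinate argument leading to $\hat{y}^Tc=\beta$ leaves that step implicit.
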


\begin{proof}
We prove the above statement by finding a harmonic which is not orthogonal to $c$. \\

The set of cycles $Z$ may be decomposed as $Z = B \oplus H_1$. The cycle $c \not\in  B$ may be expressed as $c = \alpha c_b + \beta c_h$ with $\beta \ne 0$. Let $c_h$ also denote the coefficient vector (of length $b_1$) expressed in some basis $\mathcal{B}_{H_1}$ for $H_1$. Let $K$ be a square matrix of size $b_1$, where each column represents a basis element of harmonics expressed in terms of elements in $\mathcal{B}_{H_1}$. \\

For a given cycle $c$, let $\hat{y} = KK^Tc_h$ be the projection onto the harmonic space. The dot product of $\hat{y}$ with $c$ gives
\begin{eqnarray*}
\hat{y}^Tc  &=& \hat{y}^T\left(\alpha c_b + \beta c_h \right) \\
&=& \beta
\end{eqnarray*}

$\blacksquare$
\end{proof}

A direct consequence of Lemma \ref{lem:orthogonalHarmonicsDimension} is that the the set $c^\perp \cap ker(L_1)$ has a measure zero. The set $\mathcal{S}$ given in (\ref{equ:orthogonalSet}) is a finite union of measure zero sets, and therefore has measure zero. This means that the statement ``$\left<y,c\right>=0 \Rightarrow c \in B$'' is false on a set of measure zero!\\

As stated in Theorem \ref{theo:harmonicConvergence}, our process of computing harmonics is equivalent to projecting a random vector on to the space of all harmonics. The above discussion leads to the following important theorem.

\begin{theorem}
\label{theo:contractible_cycles}
Let $c\in Z$ be a cycle in $Z$,  and let $y$ be a harmonic generated using iteration (\ref{equ:randomHarmonicIteration}). Then $c\in B \Leftrightarrow \left<y,c\right>=0$, with probability 1.
\end{theorem}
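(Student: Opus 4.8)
The plan is to treat the two implications separately. The forward direction is immediate and deterministic: if $c\in B$, then Lemma~\ref{lem:harmonics_orthogonal_to_boundaries} gives $\langle y,c\rangle=0$ for \emph{every} harmonic $y$, and in particular for the limit $y^\infty=KK^Ty^0$ produced by iteration~(\ref{equ:randomHarmonicIteration}) (invoking Theorem~\ref{theo:harmonicConvergence} to know that this limit is a harmonic). So the whole probabilistic content is in the reverse direction: I must show that if $c\notin B$ then $\langle y,c\rangle\neq 0$ almost surely.

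First I would rewrite the pairing in terms of the random seed $y^0$. Since $y=KK^Ty^0$ and $KK^T$ is symmetric,
\[
\langle y,c\rangle=(y^0)^T KK^T c=\langle y^0,w_c\rangle,\qquad w_c:=KK^Tc,
\]
so $w_c$ is the orthogonal projection of $c$ onto $ker(L_1)$, the harmonic space. The key structural input is that $w_c\neq 0$ whenever $c\notin B$. This is exactly what Lemma~\ref{lem:orthogonalHarmonicsDimension} supplies: its proof exhibits a harmonic $\hat y$ with $\langle \hat y,c\rangle=\beta\neq 0$; since every harmonic lies in the column space of $KK^T$, having $KK^Tc=0$ would force $\langle h,c\rangle=0$ for all harmonics $h$, a contradiction. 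Equivalently, $ker(L_1)\not\subseteq c^\perp$.

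Next comes the probabilistic argument. Fix $c\in Z$ with $c\notin B$. Then $\{\langle y,c\rangle=0\}$ is the event $\{y^0\in w_c^\perp\}$, and because $w_c\neq 0$ the set $w_c^\perp$ is a hyperplane in $\mathbb{R}^E$, hence has Lebesgue measure zero. As $y^0$ is uniform on $[-0.5,0.5]^E$, a law absolutely continuous with respect to Lebesgue measure, we get $\Pr[\langle y,c\rangle=0]=0$, which proves the theorem for a single fixed $c$. To justify applying the contractibility test to the whole basis at once, I would then note that $Z\setminus B$ is finite (each $\gamma(T,e)$ is a fixed $\{-1,0,1\}$-vector, so $Z$ is finite) and that a finite union of null events is null; hence with probability $1$, $\langle y,c\rangle\neq 0$ simultaneously for all $c\in Z\setminus B$. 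Combined with the forward direction this gives the stated equivalence. Alternatively one can phrase the last step as in the surrounding text: $y$ is the projection of an absolutely continuous random vector onto $ker(L_1)$, so its law is absolutely continuous on $ker(L_1)$ and almost surely avoids the measure-zero set $\mathcal{S}$ of~(\ref{equ:orthogonalSet}).

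I expect the only genuine subtlety to be the bookkeeping that converts ``measure zero'' into ``probability $1$'': one must be explicit that the distribution of $y^0$ (equivalently, of $y$ on $ker(L_1)$) is absolutely continuous so that null sets are actually avoided, and that the union over $c\notin B$ used by the algorithm is finite. The geometric core — that a non-contractible cycle has nonzero harmonic component, so that ``$\langle y,c\rangle=0\Rightarrow c\in B$'' fails only on a proper subspace of harmonics — is already contained in Lemmas~\ref{lem:harmonics_orthogonal_to_boundaries} and~\ref{lem:orthogonalHarmonicsDimension}.
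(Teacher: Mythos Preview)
Your proposal is correct and follows essentially the same route as the paper: the forward implication comes from Lemma~\ref{lem:harmonics_orthogonal_to_boundaries}, and the reverse implication is the measure-zero argument built on Lemma~\ref{lem:orthogonalHarmonicsDimension} together with the fact that $y=KK^Ty^0$ is the projection of an absolutely continuous random vector. Your choice to pull the event back to $\{y^0\in w_c^\perp\}\subset\mathbb{R}^E$ rather than to work inside $ker(L_1)$ via the set $\mathcal{S}$ is a cosmetic reformulation of the same idea, and your explicit mention of absolute continuity and of the finiteness of $Z\setminus B$ fills in details the paper leaves implicit.
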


Theorem \ref{theo:contractible_cycles} allows us to easily identify all the non-contractible cycles in $Z$. Note that for $c_1,c_2 \in Z$, by definition, $c_1$ is homologous to $c_2$ if and only if either $c_1+c_2 \in B$ or $c_1 - c_2 \in B$. Further, the integration of a harmonic on cycles is a linear process. These facts, along with Theorem \ref{theo:contractible_cycles} results in the following corollary.

\begin{corollary}
\label{cor:homologous_cycles}
Let $c_1,c_2 \in Z$ be cycles in $Z$. Then $c_1$ is homologous to $c_2$ if and only if $|\left<y,c_1\right>| = |\left<y,c_2\right>|$, with probability 1.
\end{corollary}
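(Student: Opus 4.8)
The plan is to deduce the corollary from Theorem~\ref{theo:contractible_cycles} and Lemma~\ref{lem:harmonics_orthogonal_to_boundaries}, using linearity of the integral $\left<y,\cdot\right>$ together with the stated characterization of homologous cycles: for $c_1,c_2\in Z$ one has $c_1$ homologous to $c_2$ iff $c_1-c_2\in B$ or $c_1+c_2\in B$. Since integration of a harmonic on $1$-chains is linear, $\left<y,c_1+c_2\right>=\left<y,c_1\right>+\left<y,c_2\right>$ and $\left<y,c_1-c_2\right>=\left<y,c_1\right>-\left<y,c_2\right>$. Because $|a|=|b|$ is equivalent to $a-b=0$ or $a+b=0$, the equality $|\left<y,c_1\right>|=|\left<y,c_2\right>|$ is equivalent to the disjunction $\left<y,c_1-c_2\right>=0$ or $\left<y,c_1+c_2\right>=0$. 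The whole task is thus to match this disjunction with ``$c_1-c_2\in B$ or $c_1+c_2\in B$''.

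The forward implication is deterministic and requires no appeal to randomness. If $c_1-c_2\in B$, Lemma~\ref{lem:harmonics_orthogonal_to_boundaries} gives $\left<y,c_1-c_2\right>=0$, hence $\left<y,c_1\right>=\left<y,c_2\right>$; if instead $c_1+c_2\in B$, the same lemma gives $\left<y,c_1\right>=-\left<y,c_2\right>$. In either case $|\left<y,c_1\right>|=|\left<y,c_2\right>|$.

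For the converse I would re-run the argument behind Theorem~\ref{theo:contractible_cycles}, but applied to the auxiliary $1$-cycle $d:=c_1-c_2$ (respectively $d:=c_1+c_2$) rather than to an element of $Z$. Assume $|\left<y,c_1\right>|=|\left<y,c_2\right>|$, so $\left<y,d\right>=0$ for one of the two choices of $d$. If $d\in B$ we are done; otherwise, by Lemma~\ref{lem:orthogonalHarmonicsDimension} the set $d^\perp\cap ker(L_1)$ has dimension strictly less than $b_1=\dim ker(L_1)$, hence is Lebesgue-null inside $ker(L_1)$. The point that makes this work is finiteness: since every cycle in $Z$ has coefficients in $\{-1,0,1\}$, the sums and differences $c_1\pm c_2$ with $c_1,c_2\in Z$ range over a finite set of integer vectors (coefficients in $\{-2,\dots,2\}$), so the union of the null sets $d^\perp\cap ker(L_1)$ over all such non-contractible $d$ is a \emph{finite} union of measure-zero subsets of $ker(L_1)$, hence still measure zero. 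By Theorem~\ref{theo:harmonicConvergence} the computed harmonic $y$ equals $KK^Ty^0$, the orthogonal projection onto $ker(L_1)$ of a vector $y^0$ drawn from an absolutely continuous (uniform-on-a-cube) distribution; its law on $ker(L_1)$ is therefore absolutely continuous and assigns probability $0$ to any Lebesgue-null subset. Hence, with probability $1$, $\left<y,d\right>=0$ forces $d\in B$, i.e. $c_1$ is homologous to $c_2$. Combined with the forward direction, this gives the claimed equivalence with probability $1$.

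The main obstacle is bookkeeping rather than a new idea: Theorem~\ref{theo:contractible_cycles} is stated only for $c\in Z$, whereas the converse needs the implication ``$\left<y,d\right>=0\Rightarrow d\in B$'' for the auxiliary cycles $d=c_1\pm c_2$, which in general do not lie in $Z$. One therefore has to observe that Lemma~\ref{lem:orthogonalHarmonicsDimension} and the measure-zero argument following it apply verbatim to this enlarged—but still finite—collection of candidate cycles, and that a single random draw of $y^0$ simultaneously avoids all of the finitely many exceptional sets, which is immediate once one knows the projected vector has an absolutely continuous law on $ker(L_1)$. Everything else—linearity of the integral and Lemma~\ref{lem:harmonics_orthogonal_to_boundaries}—is routine.
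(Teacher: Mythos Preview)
Your proposal is correct and follows essentially the same route as the paper, which derives the corollary directly from Theorem~\ref{theo:contractible_cycles}, linearity of $\left<y,\cdot\right>$, and the observation that $c_1$ is homologous to $c_2$ iff $c_1\pm c_2\in B$. You are in fact more careful than the paper in noting that $c_1\pm c_2$ need not lie in $Z$, and that the measure-zero argument underlying Theorem~\ref{theo:contractible_cycles} must be (and trivially can be) extended to this enlarged but still finite family of candidate cycles.
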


Corollary \ref{cor:homologous_cycles} enables the identification of homologous cycles, which we use to partition the cycles in $Z$ into subsets of homologous non-contractible cycles.  We then pick one cycle from each subset in this partition to form the set $P$. We use the symbol $P$ to also denote an $E\times m$ matrix, where $i^{th}$ column is the coordinate vector of $i^{th}$ cycle in set $P$, expressed in the standard basis for $C_1$.  \\

Note that the equivalent classes of cycles in $P$ are not necessarily linearly independent in homology. However, the cardinality of $P$ is much less than that of $Z$, and as shown in Figure \ref{fig:linearExcessCycles}, is very close to the first Betti number for geometric graphs.

\begin{figure}[!ht]
\centering
\includegraphics[width=0.5\textwidth]{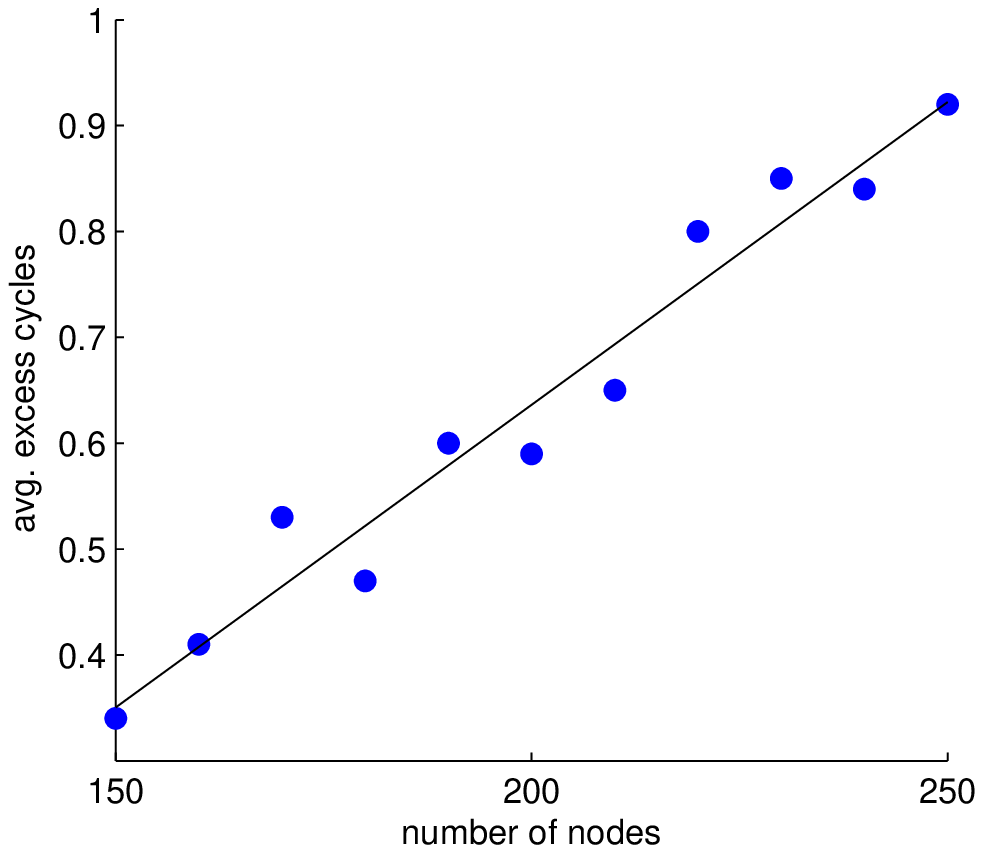}
\caption{}
\label{fig:linearExcessCycles}
\caption{figure shows the average number of excess cycles ($|P|-b_1$) remaining after pruning the tree and selecting the set $|P|$. on geometric graphs. The number of excess cycles increases linearly with the nodes, and is small compared to the Betti numbers.}
\end{figure}

\subsection{Selecting homology generating set}
\label{subsec:homology_generating_subset}
In this section, we discuss the selection of a minimal subset $H$ of $P$, which generates the homology. \\

Given an arbitrary ordering on cycles in $P$, computing the subset $H$ is equivalent to identifying all the cycles in $P$ such that the difference between each of these cycles and some linear combination of the others is contractible. In \cite{busaryev2012annotating}, for example,  this is accomplished  by  column reducing the matrix $\left[ \partial_2 \mbox{  } Z \right]$. If $r$ is the rank of $\partial_2$, then the last $b_1$ non-zero columns form the matrix $H$. The identification mentioned above is performed directly by using the range space of the $\partial_2$ operator. On the other hand, we posit that harmonics provide a very efficient way to identify these contractible cycles and thereby improving complexity. Further as shown in Section \ref{subsec:distributed_computing_harmonics} and \ref{sec:complexity}, harmonics can be computed distributively with low complexity. We now describe the application of harmonics to compute the homology generating set $H$.\\

We compute $m$ harmonics $y_i, i=1\ldots m$, using Iteration (\ref{equ:randomHarmonicIteration}), and stack them in the matrix $Y = [y_1,y_2,\ldots,y_m]$. Since the space of harmonics is isomorphic to the first homology, and the harmonics are generated randomly, the rank of $Y$ is equal to the first Betti number with probability 1. Also, since the cycles in $P$ are linearly independent, the rank of the matrix $R=Y^T P $ is also equal to the first Betti number. Note that computing the matrix $R$ is equivalent to integrating the harmonics on each cycle in $P$, with $R_{ij}$ equal to the integral of $i^{th}$ harmonic on the $j^{th}$ cycle. Further, consider a set of cycles $\{c_i\} \in P, i = i_1,\ldots,i_m$ whose cosets are linearly dependent in homology, i.e., $\sum_{i_1}^{i_m}{\alpha_i [c_i]}=0$, not all $\alpha_i=0$. Then, it follows from Lemma \ref{lem:harmonics_orthogonal_to_boundaries}, that $\sum_{i_1}^{i_m}{\alpha_i[R_i]}=0$, where $R_i$ is the $i^{th}$ column of $R$. Therefore, the cycles in $P$ corresponding to non-zero columns after reducing $R$, form the set $H$.\\

As observed in \cite{busaryev2012annotating}, this reduction may be performed in matrix multiplication time $O(|R|^\omega)$ \cite{ibarra1982generalization,jeannerod2006lsp}, where $|R|$ is the size of the matrix $R$, and $\omega$ is approximately 2.4.

\begin{figure}[!p]
\centering
\subfigure[]{
\includegraphics[width=0.2\textwidth]{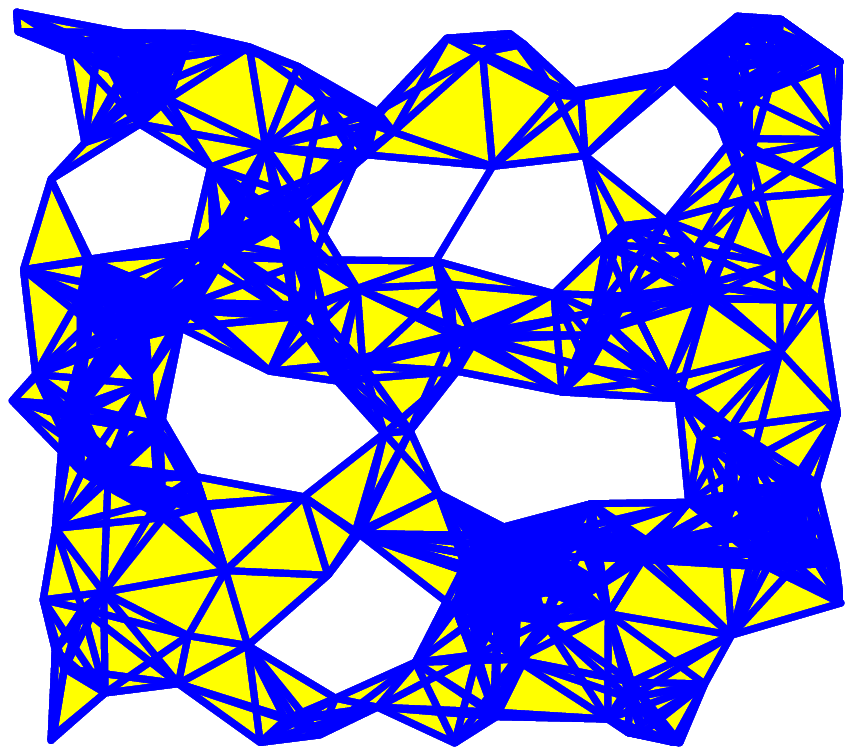}
\label{subfig:graph_filled_in}
}
\quad \quad \quad \quad
\subfigure[]{
\includegraphics[width=0.2\textwidth]{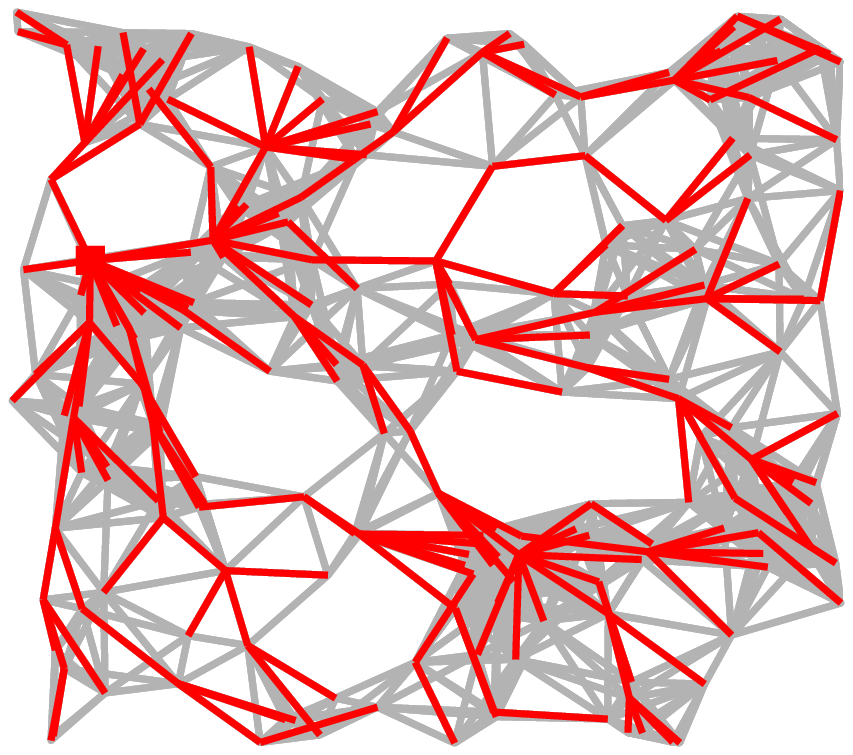}
\label{subfig:graph_spanning_tree}
}
\subfigure[]{
\includegraphics[width=0.2\textwidth]{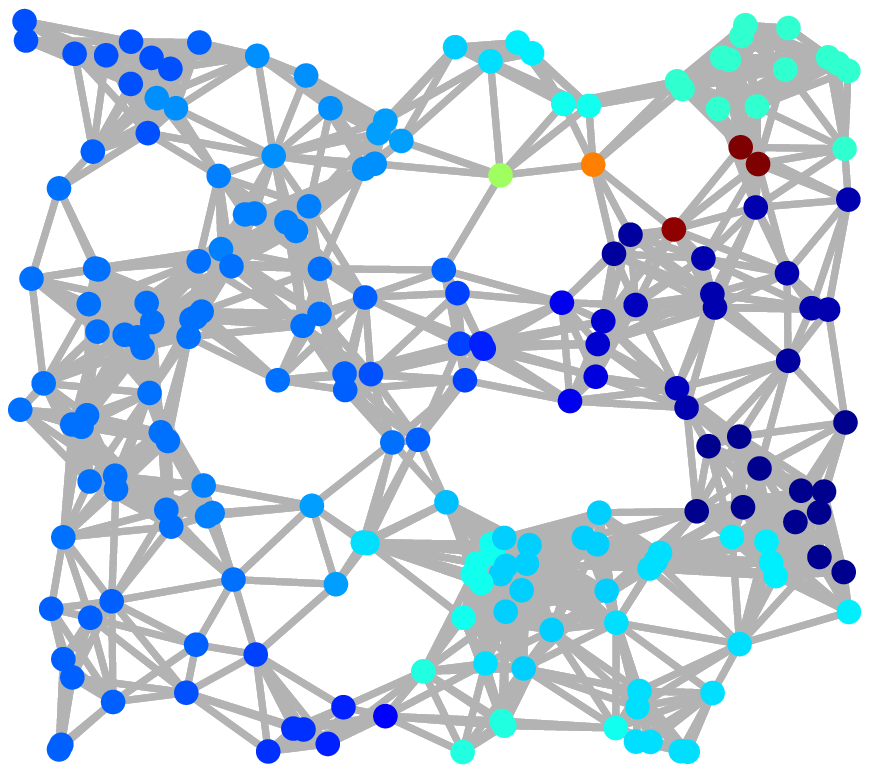}
\label{subfig:graph_harmonic_integral}
}
\quad \quad \quad \quad
\subfigure[]{
\includegraphics[width=0.2\textwidth]{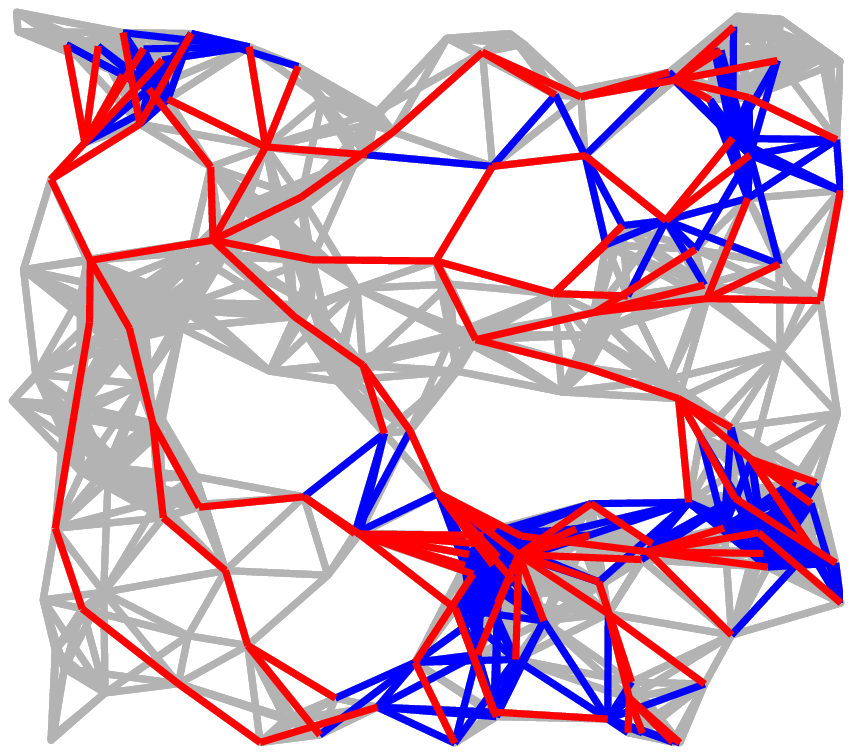}
\label{subfig:graph_pruned}
}
\quad \quad \quad \quad
\subfigure[]{
\includegraphics[width=0.2\textwidth]{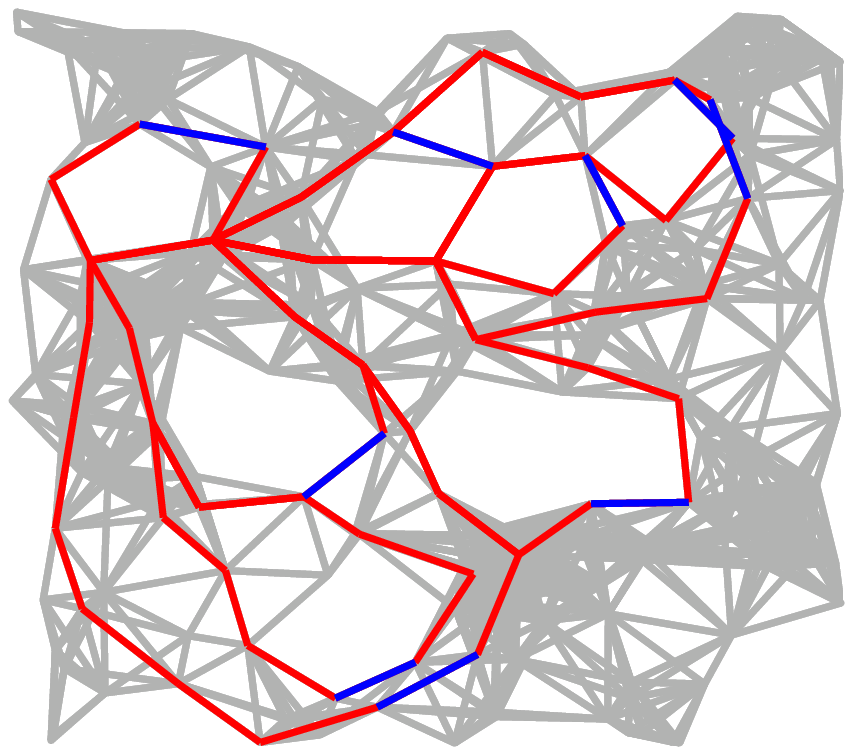}
\label{subfig:graph_pruned_clustered}
}
\quad \quad \quad \quad
\subfigure[]{
\includegraphics[width=0.2\textwidth]{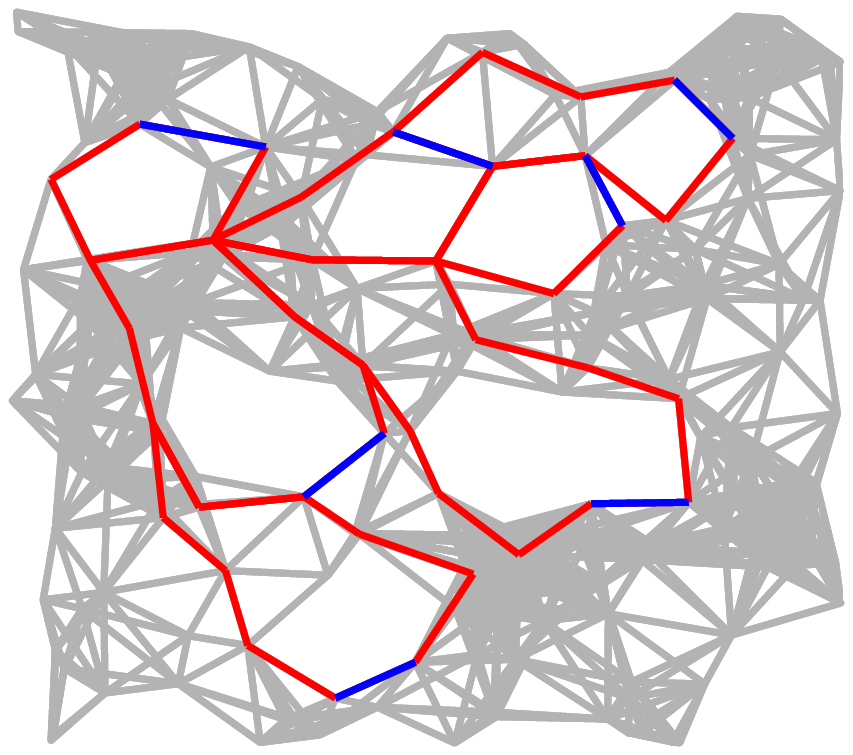}
\label{subfig:graph_homology_generators}
}
\caption[Illustration of distributed homology computation]{(a) graph with triangles ``filled in'', (b) spanning tree, (c) integral of harmonic on the tree (blue(low) to red(high) values), (d) edges corresponding to contractible cycles removed, (e) cycles in the set $P$ and (f) homology generating cycles, $H$}
\end{figure}

\section{Distributed computation}
\label{sec:distributed_computation}
Special attention is given to the communication complexity between processing units as the cost of communication between units is in general much higher than local processing.

\subsection{Computing harmonics}
\label{subsec:distributed_computing_harmonics}
Harmonics may be computed distributively using Iteration (\ref{equ:randomHarmonicIteration}), repeated here for convenience:
\begin{equation}
y^{k+1} = y^{k} - \delta L_1 y^k \nonumber
\end{equation}
In the distributed setting, each edge may be assigned a separate processing unit, and we assume that the architecture allows a processing unit to communicate simultaneously with units corresponding to adjacent edges.  In the case of sensor networks, this processing unit may be one of the nodes adjacent to the edge. We refer the reader to \cite{muhammad2007decentralized} for finer details of protocols required for such an emulation of an edge on one of the incident nodes. It suffices to say here that it requires at most 2 hops in the network for two incident edges to communicate. Note that $L_1$ is an $|E|\times |E|$ matrix, and its contents are given according to  formula given in Appendix \ref{app:formulaForL1}. The $i^{th}$ row of $L_1$ is stored locally in the processing unit corresponding to edge $e_i$. $L_1$ is generally sparse (and definitely so for geometric graphs modeling sensor networks), and therefore the memory required for storing a row locally is small. \\

As discussed in Section \ref{sec:computing_harmonics}, a choice of $\delta = 1/\|L_1\|_1$ guarantees the convergence of Iteration (\ref{equ:randomHarmonicIteration}). The ${l}_1$ norm for the Laplacian is given as $\|L_1\|_1 = \max_j{\sum_i{ |\{L_1\}_{ij}| }}$, the maximum absolute sum of its columns. Since $L_1$ is symmetric, it is also equal to the maximum absolute sum of its rows. Since the row $i$ is stored locally, its absolute sum can be computed locally. A simple gossip algorithm may be used to compute the maximum of these local sums. Table \ref{Tab:max} presents one such algorithm used in \cite{chintakunta2011topological}. The worst case in complexity occurs when each node, with value $x(i)$, discovers all the values greater than $x(i)$ in ascending order. When this happens, node $i$ will broadcast all the values greater than $x(i)$ and total number of broadcasts is equal to $n(n-1)/2$. The average number of broadcasts per node will be equal to $(n-1)/2$. However, this is a very loose upper bound and in practice, the number of broadcasts will be much smaller. \\

For the multiplication $y' = L_1 y^{k}$ in each iteration, the $i^{th}$ element is given as $y'(i) = \sum_j{ \{L_1\}_{ij}y^k(j)} = \sum_{j \in \mathcal{N}_i}{\{L_1\}_{ij}y^k(j)} $. The second equality results from the fact that the non-zero elements in the $i^{th}$ row of $L_1$ are those corresponding to adjacent edges. Therefore, this multiplication is performed simply by each processing unit/node broadcasting its value to its neighbors. The number of such broadcasts required is equal to the number of iterations required until convergence to a required precision. It is shown in Section \ref{sec:computing_harmonics} that the convergence is exponential with rate $1-\delta\lambda_1$. Specifically, the number of iterations required for a precision $\epsilon$ is equal to $log(\epsilon)/log(1-\delta\lambda_1)$.

We also see from simulation results shown in Figure \ref{fig:linearIncreaseInIterations}, that the number of iterations grows at-most linearly with  the number of nodes in geometric graphs.

\begin{table}
\centering
\begin{tabular}{l}
\hline\\
Distributed algorithm for computing $\max(x)$\\
\hline\\
At each node $i$: \\ \\
$local\_max \leftarrow x(i)$ \\
broadcast $local\_max$ to neighbors \\
when received $y$: \\
\hspace{1cm} if $(y > local\_max)$ \\
\hspace{1.2cm} $local\_max \leftarrow y$ \\
\hspace{1.2cm} broadcast $local\_max$ to neighbors \\
\hspace{1cm} endif\\
\hline
\end{tabular}
\caption{Distributed algorithm to compute $\max(x)$, where $x$ is a vector and $x(i)$ is stored at node $i$. The variable $local\_max$ at each node will eventually converge to $\max(x)$.}
\label{Tab:max}
\end{table}

\subsection{Computing spanning tree}
\label{subsec:computing_spanning_tree}
We start by selecting a root node. This is an arbitrary choice and we may choose, for example, the node with the maximum index. This node can be identified by algorithm given in Table \ref{Tab:max}. The root node initiates a broadcast packet, and all other nodes broadcast this packet to their neighbors upon reception. A node $v_i$ will choose a parent node $v_j$ in its neighboring set, such that the packet with least hop-count received by $v_i$ was relayed by $v_j$. Ties are broken arbitrarily. The pseudocode for this algorithm is shown in Table \ref{tab:spanning_tree}. At the end of the algorithm, each node knows its parent, its children and the hop length to the root node. Note that if the order in which a node receives packets from various paths is same as the hop-lengths of these paths, each node will broadcast the packet only once.

\begin{table}
\centering
\begin{tabular}{l}
\hline\\
Algorithm to compute spanning tree\\
\hline\\
At root node $v_r$\\
$hop\_count = 1$\\
broadcast $[v_r \mbox{  } hop_count]$ to $\mathcal{N}_r$ \\ \\

At any other node $v_i$ \\
$hop\_count \leftarrow \infty$ \\
$parent \leftarrow \varnothing$ \\
when received $[v_j \mbox{  } h]$ \\
\hspace{1cm} if $(h < hop\_count)$ \\
\hspace{1.2cm} $hop\_count \leftarrow h$ \\
\hspace{1.2cm} broadcast $[v_i \mbox{  } hop\_count+1]$ to $\mathcal{N}_i$ \\
\hspace{1.2cm} $parent \leftarrow v_j$ \\
\hspace{1.2cm} transmit ``$v_i$ is a child'' to $v_j$\\
\hspace{1cm} endif \\
\hline
\end{tabular}
\caption{a distributed algorithm to compute a spanning tree. When the algorithm terminates, each node will have a unique parent.}
\label{tab:spanning_tree}
\end{table}

\subsection{Identifying contractible cycles}
Theorem \ref{theo:contractible_cycles} states that a cycle $c$ is contractible (with probability 1) if and only if the integral $\left<y,c\right>$ of a harmonic $y$ on $c$ is zero. Note also that we are only interested in cycles in $Z$. The spanning tree can be effectively utilized to compute these integrals efficiently. We do this in two steps
\begin{enumerate}
\item compute an integral function $f:V\rightarrow \mathbb{R}$ on the nodes such that $f(v_i) = \left<y,\pi(p_i)\right>$, where $p_i$ is the path in $T$ joining the root node $v_r$ to $v_i$.
\item for a cycle $\gamma(T,e) \in Z$, $e = (v_j,v_k)$, the integral $\left<y,\gamma(T,e)\right>$ is equal to $f(v_j) + \left<y,e\right> - f(v_k)$
\end{enumerate}
Table \ref{tab:distributed_integral_computation} describes the algorithm to compute the integral function $f$. The root node initiates a broadcast which travels down the tree while computing the integral for each node. Each node broadcasts precisely once.   Step 2 can be performed locally at one of the incident nodes for each edge. At the end of step 2, we have identified all the cycles in $Z$ as contractible or non-contractible, with probability 1.

\begin{table}[!p]
\centering
\begin{tabular}{l}
\hline\\
Algorithm for computing the integral function\\
\hline\\
At root node $v_r$:\\
$f(v_r) \leftarrow 0$ \\
broadcast $[v_r \mbox{  } 0]$ to $\mathcal{N}_r$ \\ \\

At any other node $v_i$: \\
when received $[v_j \mbox{  } x]$ \\
if $(v_j == parent)$ \\
\hspace{1cm} if $(v_i > v_j)$ \\
\hspace{1.2cm} $temp \leftarrow y\left((v_i,v_j)\right)$ \\
\hspace{1cm} else \\
\hspace{1.2cm} $temp \leftarrow -y\left((v_i,v_j)\right)$ \\
\hspace{1cm} endif \\
\hspace{1cm} $f(v_i) \leftarrow x + temp$ \\
\hspace{1cm} broadcast $[v_i \mbox{  } f(v_i)]$ to $\mathcal{N}_i$ \\
endif
\end{tabular}
\caption{After computing a harmonic $y$, and a spanning tree, this algorithm computes the integral function $f$ on the tree.}
\label{tab:distributed_integral_computation}
\end{table}

\begin{figure}[!h]
\centering
\subfigure[]{
\includegraphics[width=0.4\textwidth]{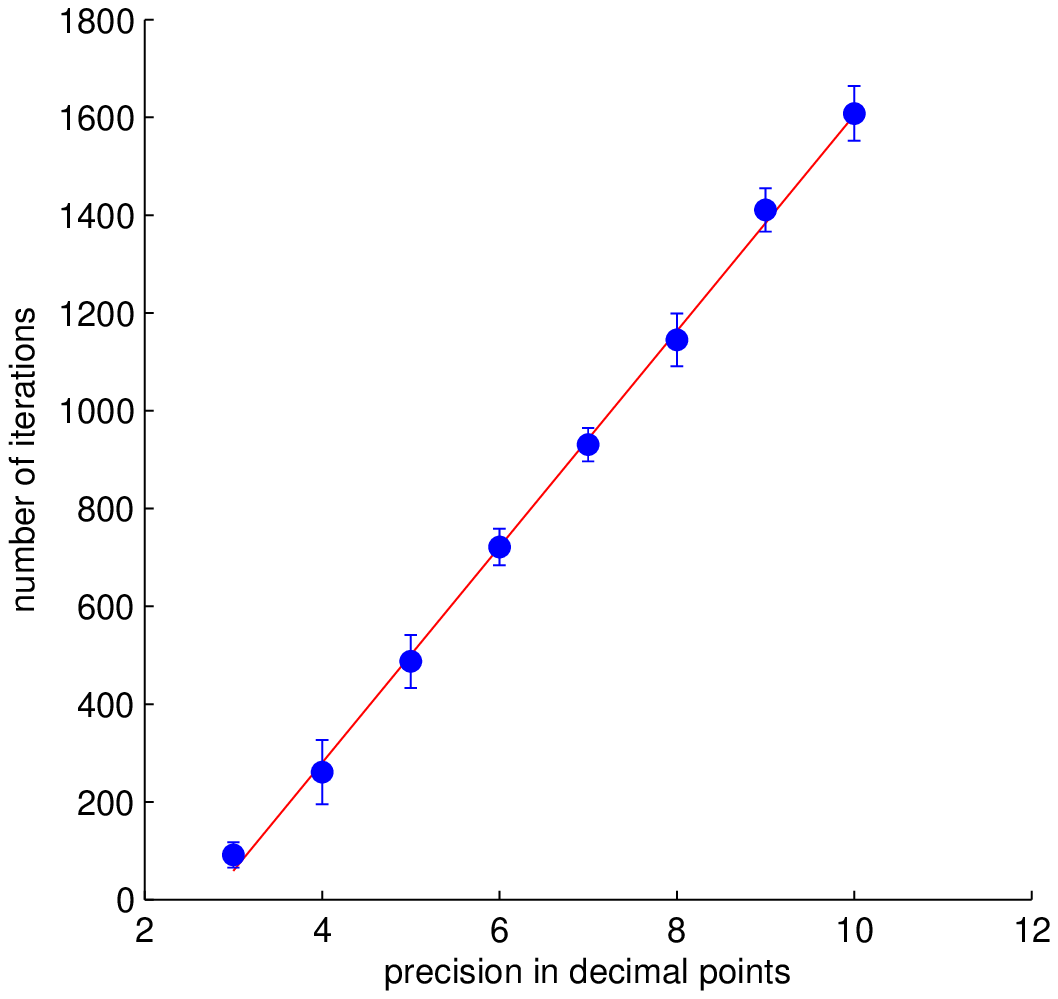}
}
\quad
\subfigure[]{
\includegraphics[width=0.4\textwidth]{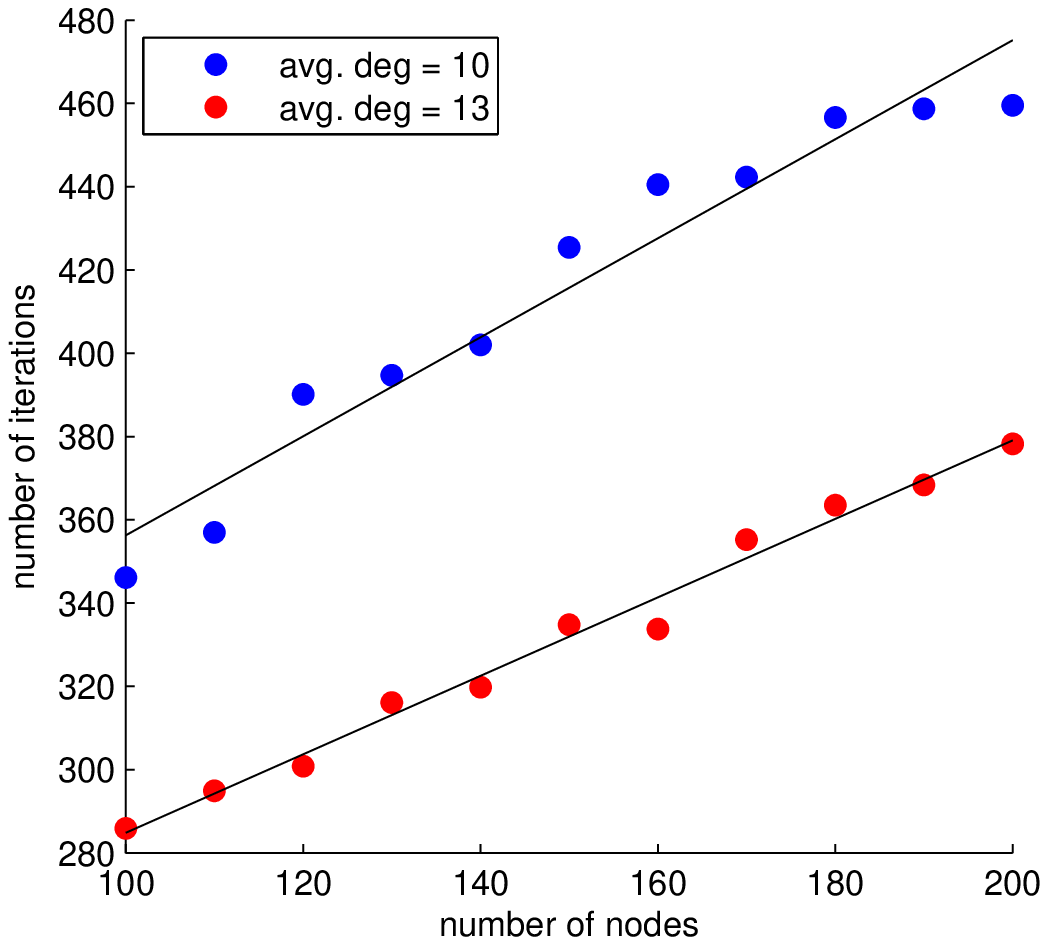}
\label{fig:linearIncreaseInIterations}
}
\caption{Figure (a) shows the average (over 100 realizations) number of iterations required to attain a required precision in decimal places. The regression line is shown in red, and the bars show the standard deviation. The linear relationship confirms exponential convergence as stated in Lemma \ref{theo:harmonicConvergence}. (b)shows the average (over 100 realizations) number of iterations to reach a precision of 6 decimal places as a function of the nodes for geometric graphs. As seen from this simulation, the growth is at most linear with the number of nodes.}
\end{figure}

\subsection{Selecting representative cycles for homologous cosets}
\label{subsec:selecting_P}
We first prune the tree of unnecessary nodes in order to reduce the communication complexity. The nodes which have been removed need not participate in the algorithm any further. If for a leaf node $v_l$ in $T$, there is no edge $e \in E\setminus E_T$ incident on $v_l$ such that $\gamma(e,T)$ is non-contractible, then we remove $v_l$ along with the edge joining $v_l$ to its parent node from $T$. We repeat this process until we cannot find any leaves satisfying the above condition. If the root node has to be removed, then its unique child will take its place as the root node. \\

Corollary \ref{cor:homologous_cycles} states that two cycles $c_1$ and $c_2$ are homologous if and only if the absolute values of the integrals $\left<y,c_1\right>$ and $\left<y,c_2\right>$ are equal. This absolute value can therefore be used as a label to identify a equivalence class of homologous cycle. The tree structure can then be exploited to select a cycle from each equivalence class efficiently. If the cycle $\gamma(e,T)$ corresponding to an edge $e = (v_i,v_j) \in E \setminus E_T$ is non-contractible, we say the nodes $v_i$ and $v_j$  are ``terminal nodes'' of cycle $\gamma(e,T)$. Note that all leaves are necessarily terminal nodes because of the pruning done above. \\

Each terminal node $v_i$ will send a packet containing terminal node pair $(v_i,v_j)$, the absolute value of the integral $\left<y,\gamma(e,T)\right>$ (which serves as a label) and the hop length of the cycle $\gamma(e,T)$ to its parent. Any non-terminal node, upon receiving the packets from all its children, clusters the cycles according to their labels and chooses a cycle with smallest hop-length from each cluster. It then transmits the information pertaining to the chosen cycles to its parent. When the root node performs the above computation, we have the required set $P$ of non-contractible cycles, no two of which are homologous to each other.  The maximum number of packets transmitted by any node remaining in the tree after pruning is equal to the cardinality of the set $P$.

\subsection{Reducing $P$ to obtain the homology generating set}
\label{subsec:reducing_P}
As described in Section \ref{subsec:homology_generating_subset}, reducing $P$ to select $H$ requires the computation of $|P|$ harmonics, where $|P|$ is the number of columns cycles in $P$. This can be performed by repeating the process described in Section \ref{subsec:distributed_computing_harmonics}, $|P|$ number of times. The harmonics are then integrated on the cycles in $|P|$ and transmitted to the root node for processing. The maximum number of packets any node has to transmit to send information to the root node is equal to $|P|^2$. Once the information is transmitted, the reduction is performed at the root node locally.

\section{Complexity}
\label{sec:complexity}
\subsection{Centralized computation}
As stated in Theorem \ref{theo:harmonicConvergence}, Iteration \ref{equ:randomHarmonicIteration} converges with exponentially with a rate $1-\lambda_1/\|L_1\|_1$ (here $\delta=1/\|L_1\|_1$). Let the required precision be $\epsilon$ and the number of iterations required to reach this precision be $\rho$. As per Equation \ref{iteration_equation}, the error term after the $k^{th}$ iteration is given as
\begin{equation}
\label{equ:lambda1_dominance}
\sum_i{\left(1-\delta\lambda_i\right)^k\alpha_iv_i} \approx \left(1-\delta\lambda_1\right)^k\alpha_1
\end{equation}
for precision $\epsilon$, we have
\begin{equation}
\epsilon = (1-\delta\lambda_1)^{\rho} |\alpha_1|
\end{equation}
As discussed in Appendix \ref{app:statistics_of_coefficients}, when the first Betti number $b_1$ is much smaller than $|E|$, then $\mathbf{E}\left[|\alpha|\right]$ approaches a constant value. The number of iterations $\rho$ required for a given precision $\epsilon$ hence varies as $O\left(\frac{\log\epsilon}{\log(1-\delta\lambda_1)}\right)$. \\

The computations required for each multiplication $L_1y^k$ is equal to the number of non-zero elements in $L_1$, which is usually sparse. Denote the number of non-zero elements in $L_1$ by $|L_1|$.  As discussed in Section \ref{subsec:reducing_P}, we compute $|P|$ number of harmonics. Therefore, the total complexity required for computing the harmonics is equal to $O\left(|P||L_1|\frac{\log\epsilon}{\log(1-\delta\lambda_1)}\right)$. Figure \ref{fig:linearIncreaseInIterations} shows simulation results for the number of iterations required as a function on nodes for geometric graphs. As seen from this simulation, the growth is at most linear with the number of nodes. \\

The steps 1) computation of spanning tree, 2) integrating a harmonic, 3) identifying the contractible cycles, 4)partitioning non-contractible cycles and 5)selecting a representative may all be accomplished with $O(E)$ complexity. \\

For the final step of reducing $P$ to homology generating set $H$, we need to integrate $|P|$ number of harmonics on each of the cycle in $P$. Since every cycle $c$ in $P$ is simple, the number of edges in $c$ is upper bounded by the number of nodes $N$ \footnote[2]{note that this is usually a loose upper-bound}. Integrating the harmonics to form the matrix $R$ hence requires $O\left(N|P|\right)$ computations. Finally, reducing the matrix $R$ has a complexity $O\left(|P|^\omega\right)$, where $\omega$ is the complexity order of matrix multiplication.\\

The complexity of the centralized algorithm as a whole is therefore given by
$O\left(|P||L_1|\frac{\log\epsilon}{\log(1-\delta\lambda_1)}\right)$ + $O(E)$ + $O\left(N|P|\right)$ + $O\left(|P|^\omega\right)$. The last term is usually the dominant factor. Note that $|P|$ is very small compared to the number of edges. In the worst case, $|P|=|E|$, and the complexity is comparable to other known algorithms for computing persistence \cite{milosavljevic2010zigzag}.  \\

If the original complex is contractible, $|P|$ is equal to zero as all the cycles in $Z$ will be contractible. In this case, the algorithm ends after computing the first harmonic and checking for non-contractible cycles, and  the complexity reduces to $O\left(|L_1|\frac{\log\epsilon}{\log(1-\delta\lambda_1)}\right)$ + $O(E)$. \\

In the case of geometric graphs in the critical regime\footnote[3]{where giant components begin to emerge}, $\mathbf{E}\left[|L_1|\right] = 2k(k-1/4)N$, where $k$ is the average node degree (see Appendix \ref{app:expectation_of_L_1}). Further, assuming $|P| \approx b_1$ as supported by Figure \ref{fig:linearExcessCycles}, the complexity simplifies to $O\left(b_1N\frac{\log\epsilon}{\log(1-\delta\lambda_1)}\right)$ +  $O\left(b_1^\omega\right)$.

\subsection{Distributed computation}
In distributed computation scenarios, the cost of communication between processing nodes is usually much higher than that of computation within nodes. In this section, we focus on the complexity of communications required between the processing nodes. Further it is also more appropriate  in these scenarios to analyze the cost per node. We also focus our analysis on systems with architecture similar to that of sensor networks, where each node has the capability to send information to all its neighbors simultaneously, as in the broadcasting.   \\

For computing the harmonics using Iteration \ref{equ:randomHarmonicIteration}, the number of packets each node has to broadcast (for edge being simulated) is equal to the number of iterations. As the edges are simulated on one of the incident nodes, a node $v_i$ will be simulating a maximum of $d_i$ number of edges. The average number of packets transmitted per node for one iteration is therefore $k$, the average of $d_i$.  The size of a packet corresponding to edge $e_i$ is proportional to the number of non-zero elements in $i^{th}$ row of $L_1$, the average of which we denote by $|L_1|_{avg}$.   Therefore, the average communication complexity per node for computing $|P|$ number of harmonics is given as $O\left(k|P||L_1|_{avg}\frac{log\epsilon}{log(1-\delta\lambda_1)}\right)$. \\

Computing the spanning tree, integrating harmonic along the tree, identifying contractible cycles and pruning the tree, each of which can be accomplished by a single broadcast from each node. As discussed in Section \ref{subsec:selecting_P}, selecting a representative cycle from each set of homologous cycles requires, in the worst case, for a node to make $|P|$ number of broadcasts. Integrating $|P|$ number of harmonics and transmitting them to the root node also require $|P|$ number of broadcasts. The communication complexity is dominated by the part of the algorithm computing the harmonics, and is equal to $O\left(k|P||L_1|_{avg}\frac{log\epsilon}{log(1-\delta\lambda_1)}\right), |P| \ll |E| $.

\section{conclusion}
\label{sec:conclusion}

We present a distributed algorithm to compute first homology given the 2-skeleton of a complex. This is made possible by 1) distributed computation of a basis for 1-cycles using spanning trees and 2) efficiently and locally identifying contractible and homologous cycles using harmonics. As discussed in Section \ref{sec:distributed_computation}, the spanning tree may be obtained very easily with a simple distributed algorithm with each node broadcasting a constant number of packets.  We show in Section \ref{sec:computing_harmonics} and \ref{sec:distributed_computation} that the harmonics may be computed with a simple distributed algorithm with exponential convergence.\\

Many algorithms for homology computation obtain a space which is isomorphic to the homology. In addition, we obtain explicit cycles, and since the coefficients are in $\mathbb{Z}_2$, are localized, i.e., have small number of non-zero coefficients compared to the number of edges. The centralized version of the algorithm is also faster than other known algorithms which do not first reduce the complex. The complexity for centralized and distributed versions of the algorithm are derived in Section \ref{sec:complexity}, and as shown, the complexity of the algorithm is polynomial in the first Betti number for complexes representing sensor networks. \\

The work presented here may also be generalized to higher dimensions. Harmonics for higher dimensions can be computed distributively in a manner very similar to the one discussed here. If we can distributively compute an equivalent to the spanning tree in higher dimension, which is the topic of our future research, then we may readily generalize the current procedure to higher dimensions.

\appendix

\section{Formula for elements of $L_1$}
\label{app:formulaForL1}
We need to introduce a few definitions before presenting the formula. Denote by $\sigma_i^j$, a simplex of dimension $j$ with index $i$. Two simplices $\sigma_1^j$, $\sigma_2^j$ are said to be upper adjacent, denoted $\sigma_1^j \frown \sigma_2^j$, is they are faces of a common simplex $\sigma_1^{j+1}$. Two simplices $\sigma_1^j$, $\sigma_2^j$ are said to be lower adjacent, denoted $\sigma_1^j \smile \sigma_2^j$, if they share a common face of dimension $j-1$. Two simplices $\sigma_1^j$, $\sigma_2^j$ are said to be similarly or dissimilarly oriented if they are lower adjacent, and they induce the similar or opposite orientation respectively on their common face. The upper degree of a simplex $\sigma_i^j$, denoted $deg_u(\sigma_i^j)$ is equal to the number of simplices of dimension $j+1$ with $\sigma_i^j$ as a face. The elements of $L_1:C_1\rightarrow C_1$ are given as follows:
$$ \{L_1\}_{ij} = \left\{
\begin{array}{ll}
deg_u\left(\sigma_i\right) + 2 & \qquad i = j \\
1 & \qquad \sigma_i \not\frown \sigma_j, \sigma_i \smile \sigma_j \mbox{$\sigma_i$ and $\sigma_j$ are similarly oriented}\\
-1 & \qquad \sigma_i \not\frown \sigma_j, \sigma_i \smile \sigma_j \mbox{$\sigma_i$ and $\sigma_j$ are disimilarly oriented}\\
0 & \qquad \mbox{otherwise}
\end{array}
\right. $$

\section{Expectation of coefficients $\{\alpha_i\}$}
\label{app:statistics_of_coefficients}
The coefficients $\{\alpha_i\}$ in the Equation \ref{equ:lambda1_dominance} are random variables. From $y^0 = \sum_i{\alpha_iv_i}$, we have
\begin{equation}
\label{equ:parseval_look_alike}
\|y^0\|^2 = \left\|\sum_i{\alpha_iv_i}\right\|^2 = \sum_i{\alpha_i^2}
\end{equation}
The second equality results from the fact that the eigenvectors $\{v_i\}$ are mutually orthogonal. The orthogonality of eigenvectors also implies the coefficients $\{\alpha_i\}$ are mutually independent. The elements of $y^0$ are generated independently from  a uniform distribution on the interval $\left[-0.5,0.5\right]$. This implies that coefficients $\{\alpha_i\}$ are identically and independently distributed. From Equation \ref{equ:parseval_look_alike}, we have
\begin{eqnarray}
\mathbf{E}\left[\|y^0\|^2\right] &=& |E|\mathbf{E}\left[y^0(1)^2\right] \nonumber \\
&=& \mathbf{E}\left[\sum_i^m{\alpha_i^2}\right] = \sum_i^m{\mathbf{E}\left[\alpha_i^2\right]} = m\mathbf{E}\left[\alpha_1^2\right] \ge m\left(\mathbf{E}\left[|\alpha_1|\right]\right)^2
\end{eqnarray}
Since $(\cdot)^2$ is a convex function, the inequality in equation above follows from Jensen's inequality. This leads to
\begin{equation}
\mathbf{E}\left[|\alpha_1|\right] \le \sqrt{\frac{|E|}{m}\mathbf{E}\left[y^0(1)^2\right]}
\end{equation}
$m$ is the number of non-zero eigenvalues of $L_1$. Since the kernel of $L_1$ is isomorphic to the first homology, $m = |E|-b_1$ where $b_1$ is the first Betti number. When $b_1$ is much smaller than $|E|$ (which is usually the case), the above equation reduces to
\begin{equation}
\label{equ:cooefficients_are_constants}
\mathbf{E}\left[|\alpha_1|\right] \le \sqrt{\mathbf{E}\left[y^0(1)^2\right]} = c
\end{equation}

\section{Expectation of $|L_1|$ for geometric graphs}
\label{app:expectation_of_L_1}
In this section, we derive the expectation of number of non-zero elements of $L_1$, denoted as $|L_1|$, for the case of geometric graphs in the critical regime. The critical regime we discuss here is the case when the parameter $r$ using in constructing the geometric graph varies as $r \propto N^{-d}$. It is shown that these values for $r$ result in emergence of giant components \cite{penrose2003random}. It is also easy to see that for large values of $N$, these values of parameter result in a constant average node degree. Denote the average node degree to be $k$. \\

The element $\{L_1\}_{ij}$ is non-zero if and only if $e_i$ is adjacent to $e_j$.
The total number of non-zero components is hence given as
\begin{equation}
\label{equ:number_of_non_zero_components_1}
|L_1| = \sum_{i=1}^{|E|}{\left(|\mathcal{N}_{e_i}|+1\right)} = \sum_{i=1}^{|E|}{\left(|\mathcal{N}_{i_1}| + |\mathcal{N}_{i_2}| - 1\right)}, e_i = (v_{i_1},v_{i_2})
\end{equation}
When the summation is carried out over $E$, the number of times each node $v_i$ appears in the second summation is equal to its node degree $d_i$. Equation \ref{equ:number_of_non_zero_components_1} then leads to
\begin{equation}
\label{equ:number_of_non_zero_components_2}
|L_1| = \left(\sum_{i=1}^{N}{ \sum_{j=1}^{d_i}{|\mathcal{N}_i|}}\right)-|E| = \sum_{i=1}^{N}{d_i^2}-\frac{1}{2}\sum_{i=1}^{N}{d_i}
\end{equation}
For large $N$, the degree on a node may be approximated using the normal distribution with mean $k$ and variance $k^2$. The expectation of $|L_1|$ is hence given as
\begin{equation}
\label{equ:expectation_of_L_1}
\mathbf{E}\left[|L_1|\right] = \mathbf{E}\left[ \sum_{i=1}^{N}{ \left(d_i^2-\frac{1}{2}d_i \right) } \right] = \sum_{i=1}^N{\left(k^2+k^2-(1/2)k\right)} = 2k\left(k-1/4\right)N
\end{equation}

\bibliographystyle{plain}
\bibliography{../theBigBib}

\end{document}